\documentclass[a4paper,11pt]{article} \usepackage[english]{babel}
\usepackage{latexsym,color,natbib,xspace,hyperref}
\usepackage{natbib,graphicx}
\linespread{1.2} \addtolength{\topmargin}{-2\baselineskip}
\addtolength{\textheight}{4\baselineskip}
\addtolength{\oddsidemargin}{-1cm} \addtolength{\textwidth}{2cm}

\newcommand{\ie}{{\em i.e.\/}\xspace}  \newcommand{\etc}{{\em etc.\/}\xspace}
 \newcommand{\bX}{\mathbf X}
 \newcommand{\bZ}{\mathbf Z}
\newcommand{\bx}{\mathbf x} 
\newcommand{\bz}{\mathbf z} \newcommand{\bo}{\mathbf 0}

    \newcommand{\sgn}{\mbox{\rm sign}}
 \newcommand{\pr}{\mbox{\rm Pr}}
\newcommand{\cd}{\,|\,} \newcommand{\half}{\frac{1}{2}}

\newtheorem{expl}{Example}
\newenvironment{ex}{\begin{expl}\rm}{\halm\end{expl}}

\newtheorem{theorem}{Theorem}
\newtheorem{lemma}
{Lemma}
\newtheorem{prop}[theorem]{Proposition}
\newtheorem{cor}[theorem]{Corollary}

\newcommand{\halm}{\hspace*{\fill} $\Box$\par}
\newenvironment{proof}{\noindent {\bf
    Proof. }}{\halm\vspace{\baselineskip}}

\newcommand{\eqref}[1]{\mbox{(\ref{eq:#1})}}

\newcommand{\lemref}[1]{\mbox{Lemma~\ref{lem:#1}}}

\newcommand{\tabref}[1]{\mbox{Table~\ref{tab:#1}}}

\newcommand{\itref}[1]{\mbox{\ref{it:#1}}}

\newcommand{\propref}[1]{\mbox{Proposition~\ref{prop:#1}}}

\newcommand{\corref}[1]{\mbox{Corollary~\ref{cor:#1}}}

\newcommand{\exref}[1]{\mbox{Example~\ref{ex:#1}}}

\newcommand{\E}{{\mbox{\rm E}}} \newcommand{\V}{{\mbox{\rm var}}}

\newcommand{\cip}{\mbox{$\perp\!\!\!\perp$}}

\begin{document}
\title{{\sc A Note on Prediction Markets }} \author{ A. Philip
  Dawid\thanks {Leverhulme Emeritus Fellow, University of Cambridge,
    UK.}  \and Julia Mortera\thanks {Universit\`a Roma Tre, Italy.  }
} \date{\today}
\maketitle

\begin{abstract}
  \noindent In a prediction market, individuals can sequentially place
  bets on the outcome of a future event.  This leaves a trail of
  personal probabilities for the event, each being conditional on the
  current individual's private background knowledge and on the
  previously announced probabilities of other individuals, which give
  partial information about their private knowledge.  By means of
  theory and examples, we revisit some results in this area.  In
  particular, we consider the case of two individuals, who start with
  the same overall probability distribution but different private
  information, and then take turns in updating their probabilities.
  We note convergence of the announced probabilities to a limiting
  value, which may or may not be the same as that based on pooling
  their private information.\\[1ex]

\noindent {\emph{JEL Classification:} C53, D8, G14}

  \noindent {\small {\em Some key words:} consensus; expert opinion;
    information aggregation; probability forecast; sequential
    prediction}

\end{abstract}
\section{Introduction}
\label{sec:intro}
This paper revisits some of the results appearing in economic
literature from a statistical point of view.

A prediction market -- also known as a predictive market, an
information market, a decision market, or a virtual market -- is a
venue where actors trade predictions on uncertain future events and
can also allow participants to stake bets on the likelihood of various
events occurring. These events include, for example, an election
result, a terrorist attack, a natural disaster, commodity prices,
quarterly sales or even sporting outcomes.  Prediction markets also
offer trade in possible future outcomes on securities markets, in
which case participants who use it are buying something like a futures
contract.  The Iowa Electronic Markets
(\url{http://tippie.uiowa.edu/iem/}) of the University of Iowa Henry
B. Tippie College of Business is one of the main prediction markets in
operation. Also companies like Google have setup their own internal
prediction markets.  Prediction markets sometimes operate on an open
market like the stock market, or in a closed market akin to a betting
pool. A prediction market translates the wisdom of crowds into
predictive probabilities.

For example, suppose that in a prediction market one can bet whether
$A$ occurs (before time $t$), where actors buy and sell contracts
among each other. Let a contract pay 1 if event $A$ occurs and 0
otherwise. Say, the market price for the contract is 0.58.  Offers to
buy and sell are fixed at 0.57 and at 0.59, respectively. Now, you can
either pay 0.59 instantly, or post an offer to pay 0.58 and see if any
actor is willing to sell at that price. Now, the current market price,
0.58, is the consensus probability.

Prediction markets have been studied by \citet{aldous13, arrow08,
  hanson03, chen10, hanson06, wolfers08, strahl15, ehm_etal15}, among
others.

\section{Basic setup}
\label{sec:basic}

We shall focus on the opinions of a specific individual, ``You'', and
how these opinions change in the light of accumulating experience.
Taking a fully Bayesian position, we suppose that Your opinions are
expressed as a joint probability distribution, $\pr$, over all
relevant variables.  Other individuals may have their own
probabilities for various events, but for You these are treated simply
as potential data.  In the sequel, all probabilities are computed
under Your distribution $\pr$.

We shall interpret the term ``expert'' in the sense of
\citet{degroot88,ddm95}.  That is, an individual $E$ is an expert (for
You) if $E$ started with exactly the same joint probability
distribution $\pr$ over all relevant variables as You, and has
observed everything that you have observed, and possibly more.  If You
now learn (just) the probability $\Pi$ that $E$ assigns to some event
$A$, your updated probability for $A$ will be $\Pi$.  That is, You
will agree with the expert.

In the context of a prediction market, experts $E_1, E_2, \ldots$,
sequentially give their probability predictions
$\Pi_1, \Pi_2, \ldots$, for an uncertain event $A$. $E_i$ is the expert 
that makes the forecast at time $i$. We allow for the case that it could be the same
 expert giving his 
forecast at different times.  At time $i$
expert $E_i$ has access to all previous forecasts
$\Pi_1,\ldots, \Pi_{i-1}$, and additional private information $H_i$.
$E_i$ will typically not have access to the private information sets
$H_1, \ldots, H_{i-1}$ that the previous experts used in formulating
their forecasts, but only to the actual forecasts made.  However, in
some markets there is an option for forecasters to leave comments,
which could give additional partial information $K_i$ (which might be
empty) about $H_i$.  We assume that each forecaster is aware of all
such past comments.  Thus $\Pi_i= \pr(A \cd T_i)$, where
$T_i := (K_1, \Pi_1, \ldots, K_{i-1}, \Pi_{i-1}, H_i)$ is the total
information available to $E_i$.

The full public information available just after time $i$ is
$S_i := (K_1, \Pi_1, K_2, \Pi_2, \ldots,K_i, \Pi_i)$.  Note that $S_i$
and $T_i$ both contain all the information made public up to time
$i-1$.  They differ only in the information they contain for time $i$:
here $T_i$ specifies the totality, $H_i$, of expert $E_i$'s
information, both public, $K_i$, and private, whereas $S_i$ specifies
only $E_i$'s public information, $K_i$, and her announced probability
forecast, $\Pi_i$, for $A$ at time $i$.  The information sets $(T_i)$
are not in general increasing with $i$, since $H_i$ is included in
$T_i$ but need not be in $T_{i+1}$.  The information sets $(S_i)$ are
however increasing.  The following Lemma and Corollary show that, for
You, for the purposes of predicting $A$ both information sets $T_i$
and $S_i$ are equivalent, and Your associated prediction is just the
most recently announced probability forecast.

\begin{lemma}
  \label{lem:1}
  $\pr(A \cd S_i) = \pr(A \cd T_i) = \Pi_i$.
\end{lemma}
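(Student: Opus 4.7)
My plan is to observe that the second equality, $\pr(A \mid T_i) = \Pi_i$, holds by definition of $\Pi_i$, so all the work is in the first equality, $\pr(A \mid S_i) = \pr(A \mid T_i)$.

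The key structural observation is that $S_i$ is a coarsening of $T_i$, i.e.\ $\sigma(S_i) \subseteq \sigma(T_i)$. To see this, I would argue component by component: $T_i$ already contains $K_1, \Pi_1, \ldots, K_{i-1}, \Pi_{i-1}$, which also appear in $S_i$; the announced probability $\Pi_i = \pr(A \mid T_i)$ is by construction a function of $T_i$; and $K_i$, being described as partial information about $H_i$, is determined by $H_i$ and hence by $T_i$. Thus every component of $S_i$ is $\sigma(T_i)$-measurable.

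With this inclusion in place, I would apply the tower property of conditional expectation under Your distribution $\pr$:
\[
\pr(A \mid S_i) \;=\; \E\!\left[\pr(A \mid T_i) \mid S_i\right] \;=\; \E[\Pi_i \mid S_i].
\]
Since $\Pi_i$ is itself one of the coordinates of $S_i$, it is $\sigma(S_i)$-measurable, so $\E[\Pi_i \mid S_i] = \Pi_i$, which closes the chain.

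There is no real obstacle here beyond being careful about what is measurable with respect to what; the only point that requires a moment's thought is explicitly justifying that $K_i$ is recoverable from $H_i$ (hence from $T_i$), which is implicit in the paper's description of $K_i$ as partial information about $H_i$. Once that is granted, the whole statement reduces to the tower property together with the measurability of $\Pi_i$ with respect to both information sets.
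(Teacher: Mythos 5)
Your proposal is correct and follows essentially the same route as the paper: the paper likewise notes $T_i \supset S_i \ni \Pi_i$ and applies the tower property, $\pr(A \cd S_i) = \E\{\pr(A \cd T_i) \cd S_i\} = \E(\Pi_i \cd S_i) = \Pi_i = \pr(A \cd T_i)$. Your only addition is spelling out why $S_i$ is determined by $T_i$ (in particular that $K_i$ is a function of $H_i$), which the paper leaves implicit in the inclusion $T_i \supset S_i$.
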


\begin{proof}
  Since $T_i \supset S_i \ni \Pi_i$,
  \begin{eqnarray*}
    \pr(A \cd S_i) &=& \E\{\pr(A \cd T_i) \cd S_i\}\\
                   &=& \E(\Pi_i \cd S_i)\\
                   &=& \Pi_i\\
                   &=& \pr(A \cd T_i).
  \end{eqnarray*}
\end{proof}

\begin{cor}
  If You observe the full public information $S_i$, and have no
  further private information, Your conditional probability for $A$ is
  just the last announced forecast $\Pi_i$.
\end{cor}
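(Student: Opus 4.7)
The corollary looks to me like an immediate unwrapping of \lemref{1}, so the plan is essentially to identify Your information set with $S_i$ and then invoke the lemma. The setup of the paper already tells us that probabilities are computed under Your distribution $\pr$, so ``Your conditional probability for $A$'' given information $\mathcal{I}$ just means $\pr(A \cd \mathcal{I})$.

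First I would state what ``You observe the full public information $S_i$, and have no further private information'' means formally: Your total information set is exactly $S_i = (K_1, \Pi_1, \ldots, K_i, \Pi_i)$, with no extra conditioning variable to append. Next I would write Your conditional probability for $A$ as $\pr(A \cd S_i)$. Finally I would apply \lemref{1}, which gives $\pr(A \cd S_i) = \Pi_i$, and that is the conclusion.

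Honestly there is no real obstacle here; the entire content of the corollary is packed into the first equality of \lemref{1}, and the corollary's role is interpretive rather than technical. The only thing one might want to spell out for the reader is why You observing the public history while being no more informed than that is the situation in which $S_i$ plays the role of the ``total available information'' in the Bayesian update, so that the generic identity of the lemma translates into the operational statement that You simply adopt the most recent announcement $\Pi_i$ as Your own probability for $A$.
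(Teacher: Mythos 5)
Your proposal is correct and matches the paper's intent exactly: the corollary is left unproved there precisely because it is the immediate reading of the first equality $\pr(A \cd S_i) = \Pi_i$ in \lemref{1}, with Your information identified as $S_i$. Nothing further is needed.
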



\section{Convergence}
\label{sec:conv}
From \lemref{1} and the fact that the information sequence $(S_i)$ is
increasing, we have:

\begin{cor}
  \label{cor:mart}
  The sequence $(\Pi_i)$ is a martingale with respect to $(S_i)$.
\end{cor}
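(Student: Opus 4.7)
The plan is to verify the three defining properties of a martingale for $(\Pi_i)$ with respect to the filtration $(S_i)$, exploiting \lemref{1} together with the fact, noted just before the corollary, that $(S_i)$ is increasing.

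First, I would observe adaptedness and integrability. Since $\Pi_i$ is one of the coordinates of $S_i$, it is $S_i$-measurable. Since $\Pi_i$ is a probability, $0 \le \Pi_i \le 1$, so $\E|\Pi_i| < \infty$ trivially.

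Next, the martingale increment condition. By \lemref{1} applied at time $i+1$, $\Pi_{i+1} = \pr(A \cd S_{i+1})$. Because $S_i \subset S_{i+1}$, the tower property of conditional expectation gives
\begin{eqnarray*}
\E(\Pi_{i+1} \cd S_i) &=& \E\{\pr(A \cd S_{i+1}) \cd S_i\} \\
&=& \pr(A \cd S_i) \\
&=& \Pi_i,
\end{eqnarray*}
where the last equality uses \lemref{1} again at time $i$. This is exactly the martingale property.

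There is no real obstacle here; the result is essentially a one-line consequence of \lemref{1} and the monotonicity of $(S_i)$. The only thing one might flag is that the argument uses the ``Your'' probability $\pr$ throughout, so the martingale statement is with respect to Your distribution — which is the standing convention in \Secref{basic}. Once this is written out, the corollary is proved.
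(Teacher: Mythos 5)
Your proof is correct and follows exactly the paper's (implicit, one-line) argument: by \lemref{1}, $\Pi_i = \pr(A \cd S_i)$, and since $(S_i)$ is increasing, the tower property gives the martingale condition. Writing out adaptedness and boundedness is a harmless elaboration of the same route.
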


Then by \corref{mart} and the martingale convergence theorem, we now
have:
\begin{cor}
  \label{cor:limit}
  As $i \rightarrow \infty$, $\Pi_i$ tends to a limiting value
  $\Pi_\infty$.
\end{cor}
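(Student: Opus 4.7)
The plan is essentially to invoke a standard convergence theorem for martingales, since Corollary~\ref{cor:mart} has already done the substantive work of establishing that $(\Pi_i)$ is a martingale with respect to $(S_i)$. So the proof is a one-line appeal, and the main task is simply to verify the hypotheses needed for the convergence theorem one wants to cite.

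First, I would note that each $\Pi_i = \pr(A \cd S_i)$ is a conditional probability of an event, hence takes values in $[0,1]$. In particular, the sequence is uniformly bounded, so $\sup_i \E(|\Pi_i|) \le 1 < \infty$, which means $(\Pi_i)$ is an $L^1$-bounded martingale (in fact uniformly bounded, so uniformly integrable). Second, the filtration $(S_i)$ is increasing, as observed in the paragraph preceding Lemma~\ref{lem:1}, so Doob's forward martingale convergence theorem applies.

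I would then simply cite Doob's theorem to conclude that $\Pi_i$ converges almost surely (and in $L^1$, by uniform integrability) to a limit random variable $\Pi_\infty$, which may be identified with $\pr(A \cd S_\infty)$, where $S_\infty = \sigma(\bigcup_i S_i)$ is the $\sigma$-algebra generated by the full public history. This identification is a standard consequence of Lévy's upward theorem applied to the indicator of $A$, and although it is not required for the statement of the corollary it gives a useful interpretation of $\Pi_\infty$.

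There is really no serious obstacle here: the content of the corollary is that $(\Pi_i)$ is a bounded martingale, and once that is in hand (via Lemma~\ref{lem:1} and Corollary~\ref{cor:mart}) convergence is automatic. The only small subtlety worth flagging is that convergence is almost sure, not pointwise, and that $\Pi_\infty$ is a random variable whose value depends on the realised history; the interesting question, deferred to subsequent sections, is whether $\Pi_\infty$ coincides with the probability one would obtain by pooling all the private information $H_1, H_2, \ldots$.
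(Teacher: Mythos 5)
Your proposal is correct and matches the paper's argument: the paper likewise deduces the corollary directly from Corollary~\ref{cor:mart} together with the martingale convergence theorem, the martingale being bounded since each $\Pi_i$ lies in $[0,1]$. Your additional remarks on almost-sure convergence and the identification of $\Pi_\infty$ with $\pr(A \cd S_\infty)$ are consistent with the paper's subsequent discussion.
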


The variable $\Pi_\infty$ is random in the sense that it depends on
the initially unknown (to You) information sequence
$S_\infty := \lim S_i$ that will materialise, but will be a fixed
value for any such sequence.

A perhaps surprising implication of \corref{limit} is that,
eventually, introduction of new experts will not appreciably change
the probability You assign to $A$ --- whatever new private information
they may bring will be asymptotically negligible compared with the
accumulated public information.  We may term $\Pi_\infty$ the {\em
  consensus probability\/} of $A$, and the information $S_\infty$ on
which it is based the {\em consensus information set\/}.

The information $S_\infty$ is {\em common knowledge} for all experts
 in the sense of \citet{aumann76}. For details See
\citet{geanakoplos:TARP,geanakoplos:TEP,nielsen, mckelvey}.

It might be considered that the limiting value $\Pi_\infty$ has
succeeded in integrating all the private knowledge of the infinite
sequence of experts.  As we shall see below this is sometimes, but not
always, the case.



\section{Two experts}
\label{sec:2exp}

As a special case, suppose we have a finite set of experts,
$E_1, \ldots,E_N$, and we take $E_{N+1} = E_1$ (so $H_{N+1} = H_1$),
$E_{N+2} = E_2$, \etc.  Thus we repeatedly cycle through the experts.
Continuing for many such cycles, eventually we will get convergence,
to some $\Pi_\infty$ --- at which point each expert will not be
changing her opinion based on the total sequence of publicly announced
forecasts, even though she still has access to additional private
information.

At convergence, it will thus make no difference to Expert~$E_i$ to
incorporate (again) her private information $H_i$.  Consequently we
have:
\begin{prop}
  \label{prop:ind}
  For each $i$, $A \,\cip\, H_i \mid S_\infty$.
\end{prop}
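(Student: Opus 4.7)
The plan is to exploit the cyclic structure of the experts together with Lemma~\ref{lem:1} and \corref{limit}, and to turn the informal statement that ``at convergence, re-incorporating $H_i$ does nothing'' into an actual identity between conditional expectations via Lévy's upward theorem.

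First I would fix $i$ and restrict attention to the subsequence of times at which expert $E_i$ is speaking, namely $i, i+N, i+2N, \dots$. The key observation, baked into the cyclic setup, is that for each $k$ the total information available to the speaking expert is
\[ T_{i+kN} = (S_{i+kN-1},\,K_{i+kN},\,H_{i+kN}) = (S_{i+kN-1},\,K_{i+kN},\,H_i), \]
so the private component $H_i$ is \emph{the same} all along this subsequence, while the remaining components form an increasing family of $\sigma$-algebras whose union generates $\sigma(S_\infty,H_i)$.

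Next I would apply \lemref{1} at each of these times to write $\Pi_{i+kN} = \pr(A\cd T_{i+kN})$. On the one hand, by \corref{limit}, $\Pi_{i+kN} \to \Pi_\infty = \pr(A\cd S_\infty)$ as $k\to\infty$. On the other hand, Lévy's upward martingale convergence theorem applied to the increasing sequence of conditioning $\sigma$-algebras $\sigma(S_{i+kN-1},K_{i+kN},H_i)$ gives
\[ \pr(A\cd T_{i+kN}) \;\longrightarrow\; \pr(A\cd S_\infty,H_i) \quad \text{a.s.\ and in $L^1$.} \]
Equating the two limits yields $\pr(A\cd S_\infty,H_i)=\pr(A\cd S_\infty)$, which is precisely the conditional independence statement $A\cip H_i\mid S_\infty$.

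I expect the main (minor) obstacle to be the bookkeeping of $\sigma$-algebras: making sure that the $K$-terms accumulated along the subsequence, together with the public forecasts, really generate the same information as $S_\infty$ so that Lévy's theorem delivers $\pr(A\cd S_\infty,H_i)$ rather than conditioning on a strictly smaller field. Once this is checked, everything else is an immediate combination of Lemma~\ref{lem:1}, \corref{limit}, and the upward convergence theorem.
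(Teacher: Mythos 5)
Your argument is correct and is essentially the paper's own reasoning made rigorous: the paper justifies \propref{ind} only by the informal remark that at convergence it makes no difference for $E_i$ to re-incorporate $H_i$, and your proof formalises exactly this by looking at the subsequence of times when $E_i$ speaks, where $T_{i+kN}=(S_{i+kN-1},H_i)$, and equating the two limits $\pr(A\cd S_{i+kN-1},H_i)\to\pr(A\cd S_\infty,H_i)$ (L\'evy upward) and $\Pi_{i+kN}\to\Pi_\infty=\pr(A\cd S_\infty)$. The only bookkeeping caveat, which you already flag, is to note that the comments $K_{i+kN}$ are functions of $H_i$ (or simply absent), so they do not enlarge the conditioning $\sigma$-algebra beyond $\sigma(S_\infty,H_i)$.
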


\citet{dutta14} give a simple example with two experts, that shows
that the order in which the experts play can matter.  In their example
they show that when one of the experts starts playing they reach a
complete consensus, whereas changing the order in which they play they
reach a limited consensus.  \citet{dutta14} also show that if an
expert has additional information this can lead to a weaker
consensus. They call this ``obfuscation''.

In the sequel we consider in detail the case $N=2$ of two experts, who
alternate $E_1, E_2, E_1, E_2, \ldots$ in updating and announcing
their forecasts.
\citet{greeks} have studied this in the case that there is no
side-information, and each expert $E_i$'s set of possible private
information has finite cardinality, $K_i$ say.  They show that exact
consensus is reached in at most $K_1+K_2$ rounds.

\subsection{Vacuous consensus}
\label{counter}

We start with some examples where the experts learn nothing from each
other's forecasts---although they would learn more if they were able
to communicate and pool their private data.

\begin{ex}
  \textbf{Parity check}\\
  \label{ex:parity}
  This example is essentially the same as that described by
  \citet[p.~198]{greeks}.

  Let $X_1, X_2$ be independent fair coin tosses.  Expert $E_i$
  observes only $X_i$ ($i=1,2$).  Let $A$ be the event $X_1 = X_2$.
  This has prior probability $0.5$.

  On observing his private information $X_1$, whatever value it may
  take, $E_1$'s probability of $A$ is unchanged, at $0.5$.  His
  announcement of that value is therefore totally uninformative about
  the value of $X_1$.  Consequently $E_2$ can only condition on her
  private information about $X_2$---which similarly has no effect.
  The sequence of forecasts will thus be $0.5, 0.5, 0.5, \ldots$.
  Convergence is immediate, but to a vacuous state.

  However, if the experts could pool their data, they would learn the
  value of $A$ with certainty.
\end{ex}

%
%

\begin{ex}
  \textbf{Bivariate normal}\\
  With this example, we generalise from predicting an uncertain event
  to predicting an uncertain quantity.

  Suppose that $E_1$ observes $X_1$, and $E_2$ observes $X_2$, where
  $(X_1, X_2)$ have a bivariate normal distribution with means
  $\E(X_i)=0$, variances $\V(X_i)=1$, and unknown correlation
  coefficient $\rho$---which is what they have to forecast.  Let
  $\rho$ have a prior distribution $\Pi_0$.  Since $X_1$ is totally
  uninformative about $\rho$, $E_1$'s first forecast is again $\Pi_0$,
  and so is itself uninformative.  Again, $E_2$ has learned nothing
  relevant to $\rho$, and so outputs forecast $\Pi_0$; and so on,
  leading to immediate convergence to a vacuous state.  However the
  pooled data $(X_1,X_2)$ is informative about $\rho$ (though does not
  determine $\rho$ with certainty).
\end{ex}

In the above examples, each expert's private information was
marginally independent of the event or variable, generically $Y$ say,
being forecast, with the immediate result that the consensus forecast
was vacuous, the same as the prior forecast.  Conversely, suppose the
consensus is vacuous.  That is to say,
\begin{equation}
  \label{eq:vac}
  Y \,\cip\, S_\infty.
\end{equation}
But from \propref{ind} (trivially generalised) we have
\begin{equation}
  \label{eq:ind1}
  Y \,\cip\, H_i \mid S_\infty.  
\end{equation}
Combining \eqref{vac} and \eqref{ind1}, we obtain
\begin{math}
  Y \,\cip\, (H_i, S_\infty)
\end{math}
whence, in particular,
\begin{displaymath}
  Y \,\cip\, H_i.
\end{displaymath}
Hence the consensus will be vacuous if and only if each expert's
private information is, marginally, totally uninformative.  The
argument extends trivially to any finite number of experts.

\subsection{Complete consensus}
We use the term \textit{complete consensus} to refer to the case that
the consensus forecast will be the same as the forecast based on the
totality of the private information available to all the individual
forecasters.  A simple situation where this will occur is when $\Pi_i$
is a one-to-one function of $H_i$, so that, by announcing $\Pi_i$,
expert $E_i$ fully reveals her private information.

\begin{ex}  \textbf{Overlapping Bernoulli trials}\\
  \label{ex:bin}
  Let $\theta$ be a random variable with a distribution over $[0,1]$
  having full support.  Given $\theta$, let $Y_0 \sim B(n_0, \theta)$,
  $Y_1 \sim B(n_1, \theta)$, $Y_2 \sim B(n_2, \theta)$, and
  $A \sim B(1, \theta)$, all independently.

  Suppose $E_1$ observes $X_1=Y_0+Y_1$, and $E_2$ observes
  $X_2=Y_0+Y_2$.  At the first stage, $E_1$ computes and announces
  $\Pi_1= \pr(A \cd X_1)$---which is a one-to-one function of $X_1$.
  For example, under a uniform prior distribution for $\theta$,
  $\Pi_1= (X_1+1)/(n_0+n_1+2)$.  Then at stage~2, $E_2$ will have
  learned $X_1$, and also has private information $X_2$.  Thus
  $\Pi_2 = \Pr(A \mid X_1, X_2)$, the correct forecast given the
  complete private information of $E_1$ and $E_2$.  Further cycles
  will not change this probability, which will be the consensus.
\end{ex}

\begin{ex}
  \textbf{Linear prediction}\\
  \label{ex:linear}
  Consider variables $\bX = (X_1, \ldots,X_k)$,
  $\bZ = (Z_1, \ldots, Z_h)$ and (scalar) $Y$, all being jointly
  normally distributed with non-singular dispersion matrix.  Expert~1
  observes $H_1 = \bX$, Expert~2 observes $H_2 = \bZ$, and they have
  to forecast $Y$.  Each time an expert announces her predictive
  distribution for $Y$, she is making known the value of her
  predictive mean of $Y$, which will be some linear combination of the
  predictor variables $(\bX,\bZ)$.  So generically we would expect
  convergence of the forecasts, after at most $\min\{k,h\}$ rounds, to
  the full forecast based on the pooled information $(\bX,\bZ)$.

  In order to investigate this we have made use of the 93CARS dataset
  \citep{lock93}, containing information on new cars for the 1993
  model year.  There are $n=82$ complete cases with information on 26
  variables, including price, mpg ratings, engine size, body size, and
  other features.  We took $\bX = (X_1, \ldots,X_{11})$ to be the
  variables 7 to 17, $\bZ = (Z_1, \ldots, Z_9)$ to be the variables 18
  to 26, and $Y$ to be variable~5 (Midrange Price).

  Let $S$ denote the uncorrected sum-of-squares-and-products matrix
  based on the data for these variables.  The fictitious model we
  shall consider for the prediction game has $(\bX, \bZ, Y)$
  multivariate normal, with mean $\bo$ and dispersion matrix
  $\Sigma = S/n$.  The predictive distribution of $Y$, based on any
  collection of linear transforms of the $X$'s and $Z$'s, will then be
  normal, with a mean-formula that can be computed by running the
  zero-intercept sample linear regression of $Y$ on those variables,
  and variance that will not depend on the values of the predictors.
  Note that, although our calculations are based on the sample data,
  the values computed are not estimates, but are the correct values
  for our fictitious model.

  Let $U_1$ be the variable so obtained from the sample regression $Y$
  on $\bX \equiv (X_1,\ldots,X_{11})$.  Recall that both experts are
  supposed to know the model, hence $\Sigma$, and know which variables
  each is observing.  Consequently both know the form of $U_1$, but
  initially only $E_1$, who knows the values of $(X_1,\ldots,X_{11})$,
  can compute its value, $u_1$ say.  Since his round-1 forecast for
  $Y$ is normal with mean $u_1$, while its variance is already
  computable by both experts, the effect of $E_1$ issuing his forecast
  is to make the value $u_1$ of $U_1$ public knowledge.

  It is now $E_2$'s turn to play.  At this point she knows the values
  of $U_1$ and $(Z_1, \ldots, Z_9)$, and her forecast is thus obtained
  from the sample regression of $Y$ on these variables.  Let this
  regression function (computable by both experts) be $V_1$; then at
  this round $E_2$ effectively makes the value $v_1$ of $V_1$ public.

  Now at round $2$, $E_1$ regresses $Y$ on $(X_1,\ldots,X_{11},V_1)$
  ($U_1$, which is a linear function of his privately known $X$'s,
  being redundant), and announces the value $u_2$ of the computed
  regression function $U_2$.  And so on.

  The relevant computations are easy to conduct using the statistical
  software package {\tt R} \citep{Manual:R}.  At each stage, we
  computed the 82 fitted values based on the regression just
  performed.  These can then be used as values for the new predictor
  variable to be included in the next regression.  Moreover,
  convergence of the forecast sequence will be reflected in
  convergence of these fitted values.  We observe this convergence,
  both for the fitted values and the predicted standard deviations,
  from round~10 onwards: as soon as $E_1$ has access to the values of
  $U_1,\ldots,U_9$, he effectively knows $Z_1,\dots,Z_9$, and his
  forecast becomes the same as that based on the pooled data.  And as
  soon as $E_1$ makes that public, $E_2$ can make the same forecast.

  As a numerical illustration, suppose $E_1$ has observed
  $$\bX = \bx = (16,25,2,1,8,4.6,295,6000,1985,0,20.0),$$ and $E_2$ has
  observed $$\bZ = \bz = ( 5,204,111,74,44,31.0,14,3935,1).$$ Before
  entering the prediction market, $E_1$'s point forecast for $Y$,
  based on his data $\bX = \bx$, is $u_1=40.6163$, and $E_2$'s point
  forecast for $Y$, based on her data $\bZ = \bz$, is $v_0=30.6316$.
  If they could combine their data, the forecast, based on
  $(\bX,\bZ) = (\bx,\bz)$, would be $39.73925$.
 
  On entering the market, the sequence of their predictions is as
  given in \tabref{seq}.
  \begin{table}[btp]
    \centering
    \begin{tabular}{r | llllllllllc}
      $i$: &1&2&3&4&5&6&7&8&9&10&\ldots\\ \hline
      $u_i$: &40.62&39.49&39.34&39.51&39.55&39.54&39.66&39.75&39.73917&39.73925&\ldots\\
      $v_i$:  &38.28&39.40&39.46&39.54&39.56&39.63&39.67&39.74&39.73924&39.73925&\ldots
    \end{tabular}
    \caption{Sequence of market predictions for $Y$}
    \label{tab:seq}
  \end{table}
  We see convergence to the value based on all the data at round~10,
  by which point $E_2$, having publicly announced the values of $9$
  predictor variables, has effectively revealed all her
  $9$-dimensional private information to $E_1$.  The predictions of
  both experts will stay the same thereafter.

  As a second illustration, suppose $E_1$ has observed
  $$\bX = \bx = (22, 30, 1, 0, 4, 3.5, 208, 5700, 2545, 1, 21.1),$$ and $E_2$ has
  observed $$\bZ = \bz = (4, 186, 109, 69, 39, 27.0, 13, 3640, 0).$$
  Before entering the prediction market, $E_1$'s point forecast for
  $Y$ is $27.80968$, and $E_2$'s point forecast is $36.593865$.  Their
  market forecasts converge at round~10 to $31.22983$, the forecast
  based on all the data.

  These two cases illustrate within-sequence convergence, but to a
  random (\ie, data-dependent) limit.
\end{ex}

A similar example for a linear prediction that gives the same
  basic results was shown in \citet{dutta14}. They however do not give
  a numerical illustration.


 %

\subsection{Limited consensus}
\label{sec:limited}

In all the above examples, convergence was either to a vacuous state,
or to a complete consensus based on the totality of the pooled private
information.  As the following example shows, it is also possible to
converge to an intermediate state.

\begin{ex}
  \label{ex:ts}
  Suppose $\theta$ and $X_1$ have independent $N(0,1)$ distributions,
  while, given $(\theta, X_1)$, \mbox{$X_2 \sim N(\theta X_1,1).$}
  Expert $E_1$ observes $H_1 =X_1$, while $E_2$ observes $H_2=X_2$.
  The interest is in predicting $\theta$.  A sufficient statistic for
  $\theta$, based on the combined data $(X_1,X_2)$, is
  $ (X_1 X_2, |X_1|) = (S_1, S_2)$, say.  The posterior distribution
  is
  \begin{displaymath}
    \theta \mid (S_1,S_2) = (s_1,s_2) \sim N\left(\frac{s_1}{1+s_2^2},\frac{1}{1+s_2^2}\right).
  \end{displaymath}

  Straightforward computations deliver the joint density of
  $(X_1,X_2)$, marginalising over $\theta$:
  \begin{equation}
    \label{eq:x1x2}
    f(x_1,x_2) = (2\pi)^{-1} (1+x_1^2)^{-\half} \exp-{\half}\left(x_1^2+\frac{x_2^2}{1+x_1^2}\right).
  \end{equation}
  Because \eqref{x1x2} is unchanged if we change the sign of either or
  both of $x_1$ and $x_2$, we deduce (what may be obvious from the
  symmetry of the whole set-up):
  \begin{prop}
    \label{prop:flip}
    Conditionally on $|X_1|$ and $|X_2|$, $\sgn(X_1)$ and $\sgn(X_2)$
    behave as independent fair coin-flips.
  \end{prop}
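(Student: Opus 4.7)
The plan is to exploit directly the sign symmetry of the joint density \eqref{x1x2} that the authors have just highlighted. Writing $\epsilon_j := \sgn(X_j)$ and $A_j := |X_j|$ for $j=1,2$, I want to show that the joint law of $(\epsilon_1,\epsilon_2,A_1,A_2)$ factorises as (uniform pmf on $\{-1,+1\}^2$) times (marginal density of $(A_1,A_2)$); the two claimed properties (conditional independence of the signs given the magnitudes, and each sign being a fair coin flip) then drop out simultaneously.

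First I would record the symmetry. In the formula \eqref{x1x2}, $x_1$ and $x_2$ appear only through $x_1^2$ and $x_2^2$, so
\[
f(\epsilon_1 a_1,\epsilon_2 a_2)=f(a_1,a_2)
\]
for every $(\epsilon_1,\epsilon_2)\in\{-1,+1\}^2$ and every $(a_1,a_2)\in(0,\infty)^2$. Second, by the standard change of variables $(X_1,X_2)\mapsto(\epsilon_1,\epsilon_2,A_1,A_2)$, which is a bijection off a Lebesgue-null set with unit Jacobian on each of the four orthants, the joint density of $(\epsilon_1,\epsilon_2,A_1,A_2)$ with respect to counting measure on $\{-1,+1\}^2$ times Lebesgue measure on $(0,\infty)^2$ is exactly $f(\epsilon_1 a_1,\epsilon_2 a_2)$. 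By the symmetry this reduces to $f(a_1,a_2)$, which I would rewrite as
\[
\tfrac{1}{4}\cdot 4f(a_1,a_2).
\]
The first factor is the uniform pmf on $\{-1,+1\}^2$; the second is (by integrating out the signs) the marginal density of $(A_1,A_2)$.

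From this product form I would read off that $(\epsilon_1,\epsilon_2)\cip(A_1,A_2)$ and that $(\epsilon_1,\epsilon_2)$ is uniform on $\{-1,+1\}^2$. Together these give exactly the statement of \propref{flip}: conditionally on $(|X_1|,|X_2|)$, the pair $(\sgn X_1,\sgn X_2)$ is uniform on the four sign patterns, i.e.\ two independent fair coin-flips.

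There is no genuine obstacle here; once \eqref{x1x2} is granted, the proposition is essentially the observation that a density depending on $(x_1,x_2)$ only through $(x_1^2,x_2^2)$ automatically yields uniform conditional sign distributions. The only place requiring mild care is the measure-theoretic bookkeeping in the change of variables from $(X_1,X_2)$ to $(\epsilon_1,\epsilon_2,A_1,A_2)$, and the verification that the event $\{X_1=0\}\cup\{X_2=0\}$ has probability zero (immediate from the continuity of $f$), so that the signs are well-defined almost surely.
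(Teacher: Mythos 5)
Your proof is correct and follows essentially the same route as the paper: the paper simply notes that the density \eqref{x1x2} is invariant under flipping the sign of either or both arguments and declares the proposition immediate, and your argument just makes that symmetry observation rigorous via the change of variables to signs and magnitudes (in fact yielding the slightly stronger conclusion that the sign pair is independent of the magnitude pair).
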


  At the first round, $E_1$ declares his posterior for $\theta$, based
  on $X_1$---but, since $X_1 \,\cip\,\theta$ this supplies no
  information at all about $\theta$.  (So we would get the same answer
  if $E_2$ were to go first---the order in which they announce their
  opinions does not matter.)

  Now $E_2$ goes.  Since $X_2 \mid \theta\sim N(0, 1+\theta^2)$, with
  sufficient statistic $|X_2|$, $E_2$ is effectively putting $|X_2|$
  into the public pot.

  At the start of round 2, $E_1$ knows $X_1$ and $|X_2|$.  By
  \propref{flip}, $\sgn(X_2)$ is still equally likely to be $1$ or
  $-1$.  So $E_1$ knows $S_2$, but only knows $|S_1|$---for him, $S_1$
  is either $|S_1|$ or $-|S_1|$, each being equally likely. His
  posterior is thus a 50--50 mixture of the associated posteriors
  $$N\left(\frac{|S_1|}{1+S_2^2},\frac{1}{1+S_2^2}\right)$$
  and
  $$N\left(\frac{-|S_1|}{1+s_2^2},\frac{1}{1+S_2^2}\right).$$
  On $E_1$'s now announcing this mixture posterior, he is effectively
  communicating $(|S_1|,S_2) \equiv (|X_1|\times |X_2|, |X_1|)$.  The
  total information in the public pot is thus now equivalent to
  $(|X_1|, |X_2|)$.
 
  It is now $E_2$'s turn again.  At this point she knows
  $(|X_1|,X_2)$, so $(|S_1|, S_2)$---but still does not know
  $\sgn(S_1)$, which again behaves as a coin-flip.  Her forecast
  distribution is thus exactly the same as $E_1$'s.  So we get
  convergence to the above mixture posterior at round 2.  But this
  limiting forecast is not the same as that based on the pooled data,
  which would be the relevant single component of the mixture.

  Note that, at convergence, the pool of public knowledge is
  $(|X_1|, |X_2|)$.  Since $\theta$ has the identical mixture
  posterior whether conditioned on $(|X_1|, |X_2|)$, on
  $(X_1, |X_2|)$, or on $(|X_1|, X_2)$, we have both
  $\theta\,\cip\, X_1 \mid (|X_1|, |X_2|)$ and
  $\theta\,\cip\, X_2 \mid (|X_1|, |X_2|)$, in accordance with
  \propref{ind}.
\end{ex}

It might appear that the above behaviour is highly dependent on the
symmetry of the problem, but this is not so.  As the following
analysis shows, the same limited consensus behaviour arises on
breaking the symmetry.
\begin{ex}
  Consider the same problem as in \exref{ts} above, with the sole
  modification that the prior distribution of $\theta$ is now
  $N(\mu,1)$, where $\mu$ is non-zero.  The posterior distribution of
  $\theta$, based on the full data $(X_1,X_2)$ or its sufficient
  statistic $(S_1,S_2)$, is now
  \begin{displaymath}
    \theta \mid (S_1,S_2) = (s_1,s_2) \sim \Pi(s_1,s_2) := N\left(\frac{\mu+s_1}{1+s_2^2},\frac{1}{1+s_2^2}\right).
  \end{displaymath}

  \renewcommand{\theenumi}{(\roman{enumi})} The following result is
  immediate.
  \begin{prop}
    \label{prop:mixpost}
    Given only $|S_1| = m_1, S_2 = m_2$, the posterior distribution is
    a mixture:
    \begin{equation}
      \label{eq:mixpost}
      \theta \sim M(m_1, m_2) = \pi(1) \Pi(m_1, m_2) + \pi(-1)\Pi(-m_1, m_2)
    \end{equation}
    where
    \begin{equation}
      \label{eq:pi}
      \pi(j) = P(\sgn(S_1) = j \cd |S_1| = m_1, S_2 = m_2)\quad(j = \pm 1). 
    \end{equation}

  \end{prop}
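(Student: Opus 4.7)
The plan is to view \eqref{mixpost} as a pure application of the tower property of conditional expectations (equivalently, the law of total probability for conditional distributions), applied to the conditioning $\sigma$-field generated by $(|S_1|, S_2)$ enlarged by $\sgn(S_1)$.

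First, I would note that the pair $(S_1, S_2) = (X_1 X_2, |X_1|)$ remains a sufficient statistic for $\theta$ in the shifted-prior model, since the likelihood factorisation from \exref{ts} is unaffected by the change of prior mean from $0$ to $\mu$. A direct computation—identical to the one sketched in \exref{ts} but retaining the prior mean $\mu$—then yields the stated normal posterior $\Pi(s_1, s_2)$ on conditioning on the exact value $(S_1, S_2) = (s_1, s_2)$.

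Next, observe that the $\sigma$-field generated by $(|S_1|, S_2, \sgn(S_1))$ coincides with that generated by $(S_1, S_2)$, since $S_1 = \sgn(S_1)\cdot|S_1|$. Hence, on the event $\{|S_1| = m_1, S_2 = m_2, \sgn(S_1) = j\}$, the conditional law of $\theta$ is $\Pi(j m_1, m_2)$. Applying the tower property,
\begin{eqnarray*}
P(\theta \in B \mid |S_1| = m_1, S_2 = m_2)
&=& \E\{P(\theta \in B \mid |S_1|, S_2, \sgn(S_1)) \mid |S_1| = m_1, S_2 = m_2\}\\
&=& \sum_{j = \pm 1} \Pi(j m_1, m_2)(B)\, \pi(j),
\end{eqnarray*}
with $\pi(j)$ as defined in \eqref{pi}. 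This is precisely the mixture in \eqref{mixpost}.

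There is no real obstacle: the only substantive step is recognising that once one refuses to condition on $\sgn(S_1)$, the posterior averages the two sharp posteriors with weights equal to the conditional sign probabilities. (The asymmetry introduced by $\mu \neq 0$ means that, unlike under \propref{flip}, these weights will typically not equal $1/2$; but computing them is not required here and is presumably the subject of a subsequent result.)
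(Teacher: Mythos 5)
Your proof is correct and supplies exactly the justification the paper takes for granted: the paper states this proposition without proof (``the following result is immediate''), and your argument --- that $(|S_1|,S_2,\sgn(S_1))$ carries the same information as $(S_1,S_2)$, whose exact-data posterior is $\Pi(s_1,s_2)$ since sufficiency of $(S_1,S_2)$ is a likelihood property unaffected by the prior mean $\mu$, followed by the tower property to average over $\sgn(S_1)$ with weights $\pi(j)$ --- is precisely that immediate reasoning. One minor aside: the subsequent \propref{flip2} is not aimed at computing the weights $\pi(j)$, but at showing that each expert's residual sign knowledge ($\sgn(X_1)$ or $\sgn(X_2)$) is irrelevant to $\sgn(S_1)$, so that each announces exactly the mixture $M(|S_1|,S_2)$.
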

  \begin{prop}
    \label{prop:flip2}
    Conditionally on $|X_1|$ and $|X_2|$:
    \begin{enumerate}
    \item \label{it:x1} $\sgn(X_1)\,\cip\,\sgn(X_1X_2)$
    \item \label{it:x2} $\sgn(X_2)\,\cip\,\sgn(X_1X_2)$
    \end{enumerate}
  \end{prop}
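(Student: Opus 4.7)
The plan is to exploit a symmetry of the joint marginal density of $(X_1,X_2)$. First I would observe that, for any value of $\theta$, the conditional density of $(X_1,X_2)$ given $\theta$ is
\begin{displaymath}
f(x_1,x_2\cd\theta) \;\propto\; \exp\left(-\tfrac{1}{2}x_1^2 - \tfrac{1}{2}(x_2-\theta x_1)^2\right),
\end{displaymath}
whose exponent is a polynomial in $x_1,x_2$ involving only $x_1^2$, $x_2^2$ and the product $x_1 x_2$. Hence, integrating out $\theta$ against the $N(\mu,1)$ prior, the marginal joint density $f(x_1,x_2)$ is again a function of $(x_1^2, x_2^2, x_1 x_2)$ only---equivalently, a function of $(|x_1|, |x_2|, \sgn(x_1x_2))$. (This is the natural generalisation of \eqref{x1x2} to non-zero $\mu$; if needed, I would complete the square in $\theta$ and carry out the Gaussian integral to display it explicitly.)

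Second, I would translate this into a statement about the conditional law of $(\sgn(X_1), \sgn(X_2))$ given $(|X_1|,|X_2|)$. Since the joint density takes the same value at the four sign-configurations paired by equal value of $\sgn(x_1x_2)$, we have
\begin{displaymath}
\pr(\sgn X_1 = +1, \sgn X_2 = +1 \cd |X_1|,|X_2|) = \pr(\sgn X_1 = -1, \sgn X_2 = -1 \cd |X_1|,|X_2|)
\end{displaymath}
and similarly the two ``$\sgn(x_1x_2)=-1$'' cases have equal conditional probability. Writing $p = \pr(\sgn(X_1X_2) = +1 \cd |X_1|,|X_2|)$, it follows that the four joint conditional probabilities are $p/2, p/2, (1-p)/2, (1-p)/2$.

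Third, from this table I read off the marginal conditional laws: $\pr(\sgn X_1 = +1 \cd |X_1|,|X_2|) = \half$ unconditionally, and $\pr(\sgn X_1 = +1 \cd \sgn(X_1X_2) = \pm 1, |X_1|,|X_2|) = \half$ as well, establishing (i); the identical argument with the roles of $X_1$ and $X_2$ swapped yields (ii).

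The only real obstacle is the density computation, and that reduces to a one-dimensional Gaussian integral. The substantive content is entirely the symmetry observation that the conditional density depends on $(x_1,x_2)$ through $x_1^2$, $x_2^2$ and $x_1x_2$ alone---a feature that survives even when the prior on $\theta$ is no longer symmetric about zero.
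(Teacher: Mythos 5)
Your proposal is correct and follows essentially the same route as the paper: the key fact in both is that the marginal density of $(X_1,X_2)$ is invariant under the simultaneous sign change $(x_1,x_2)\mapsto(-x_1,-x_2)$, i.e.\ depends on $(x_1,x_2)$ only through $(|x_1|,|x_2|,\sgn(x_1x_2))$, from which the conditional probabilities of $\half$ for the signs follow. The only differences are cosmetic: you obtain the symmetry from the form of the exponent before integrating out $\theta$ rather than from the explicit closed-form marginal density (which incidentally makes transparent the paper's parenthetical remark that any prior for $\theta$ works), and you read off part \itref{x2} directly from the four-cell sign table rather than reducing it to part \itref{x1} as the paper does.
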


  \begin{proof}
    \itref{x1} The joint density of $(X_1,X_2)$, marginalising over
    $\theta$, is
    \begin{displaymath}
      f(x_1,x_2) = (2\pi)^{-1} (1+x_1^2)^{-\half} \exp-{\half}\left(x_1^2+\frac{(x_2-\mu x_1)^2}{1+x_1^2}\right).
    \end{displaymath}
    This is unchanged if we change the signs of both $x_1$ and $x_2$.
    Consequently, given $|X_1|=m_1, |X_2|=m_2$,
    $P(X_1 = m_1, X_2 = m_2) = P(X_1 = -m_1, X_2 = -m_2)$, while
    $P(X_1 = m_1, X_2 = -m_2) = P(X_1 = - m_1, X_2 = m_2)$.  But this
    is equivalent to
    \begin{eqnarray*}
      P(\sgn(X_1) = 1, \sgn(X_1X_2) = 1) &=& P(\sgn(X_1) = -1,
                                             \sgn(X_1X_2) = 1)\\
      P(\sgn(X_1) = 1, \sgn(X_1X_2) = -1) &=& P(\sgn(X_1) = -1,
                                              \sgn(X_1X_2) = -1). 
    \end{eqnarray*}
    Thus
    $P(\sgn(X_1) = 1 \cd \sgn(X_1X_2) = 1) = P(\sgn(X_1) = 1 \cd
    \sgn(X_1X_2) = -1) = \half$, which in particular implies
    $\sgn(X_1)\,\cip\,\sgn(X_1X_2)$.

    \itref{x2} We have
    \begin{eqnarray*}
      P(\sgn(X_2) = 1 \cd \sgn(X_1X_2) = 1) &=& P(\sgn(X_1) = 1 \cd \sgn(X_1X_2) = 1)\\
      P(\sgn(X_2) = 1 \cd \sgn(X_1X_2) = -1) &=& P(\sgn(X_1) = -1, \sgn(X_1X_2) = -1)
    \end{eqnarray*}
    So from \itref{x1}, conditional on $|X_1|=m_1, |X_2|=m_2$,
    $P(\sgn(X_2) = 1 \cd \sgn(X_1X_2) = 1) = P(\sgn(X_2) = 1 \cd
    \sgn(X_1X_2) = -1) = \half$ so that, in particular,
    $\sgn(X_2)\,\cip\,\sgn(X_1X_2)$.
  \end{proof}
  
  In the first round, $E_1$ and $E_2$ behave exactly as before, and
  again, at the start of round 2, the public pot contains $|X_2|$.  So
  now $E_1$ knows $X_1$ and $|X_2|$.  In terms of the sufficient
  statistic he knows $(|S_1|, S_2)$, but does not know $\sgn(S_1)$.
  Moreover, by \propref{flip2}\itref{x1}, his additional knowledge of
  $\sgn(X_1)$ contains no relevant further information about
  $\sgn(S_1)$.  Consequently, he will compute and announce the mixture
  posterior $M(|S_1|, S_2)$.  From this it is possible to deduce the
  values of $|S_1|$ and $S_2$.  Hence at this point the public pot
  contains $(|S_1|, S_2)$.

  Now $E_2$ knows $(|S_1|, S_2)$, but is still ignorant of
  $\sgn(S_1)$.  And again, although she has the additional knowledge
  of $\sgn(X_2)$, by \propref{flip2}\itref{x2} this contains no
  relevant further information about $\sgn(S_1)$.  Consequently, $E_2$
  will have the same posterior distribution $M(|S_1|, S_2)$, which
  will be the final (but limited) consensus.

  (Note that an essentially identical analysis will hold with any
  prior distribution for $\theta$.)

\end{ex}

\section{Discussion}
\label{sec:disc}
We have displayed a variety of behaviours for a process where two
experts take it in turns to update their probability of a future
event, conditioning only on the revealed probabilities of the other.
Although there will always be convergence to a limiting value, this
may or may not be the same as what they could achieve if they were
able to pool all their private information.

We have supposed throughout that, although each expert may be unaware
of the private information held by the other, he does at least know
which variables the other expert knows---just not their values.  When
this cannot be assumed there will be much greater freedom to update
one's own probability on the basis of the revealed probability of the
other.  Nevertheless this freedom is restricted.  Some theory relevant
to the case of combining the announced probabilities of a number of
experts, without even knowing the private variables on which these are
based, may be found in \citet{ddm95}.  It would be challenging, but
valuable, to extend this to the present sequential case.

\end{document}